\theoremstyle{remark}
\newtheorem{remark}{Remark}[]
\theoremstyle{definition}
\newtheorem{rem}[remark]{Remark}
\theoremstyle{theorem}
\newtheorem{prop}[remark]{Proposition}
\newtheorem{corol}[remark]{Corollary}
\newtheorem{lemma}[remark]{Lemma}
\newcommand{\Gtwo}{G_2}
\newcommand{\beq}{\begin{equation}}
\newcommand{\eeq}{\end{equation}}
\newcommand{\bqn}{\begin{eqnarray}}
\newcommand{\eqn}{\end{eqnarray}}
\newcommand{\bqne}{\begin{eqnarray*}}
\newcommand{\eqne}{\end{eqnarray*}}
\newcommand{\R}{{\mathbb R}}
\newcommand{\Z}{\mathbb Z}
\newcommand{\SU}{{\rm SU}}
\newcommand{\SO}{{\rm SO}}
\newcommand{\RP}{\mathbb{RP}}
\newcommand{\Spin}{\mathrm{Spin}}
\newcommand{\PS}{\mathbb{P}(S)}
\begin{document} 
\title{{A flow of isometric $\Gtwo$-structures. Short-time existence}}
\author{Leonardo Bagaglini}
\affil{Dipartimento di Matematica e Informatica {``}Ulisse Dini{"}. Università degli Studi di Firenze. E-mail address: leonardo.bagaglini@unifi.it.}
\maketitle
\abstract{We introduce a flow of $\Gtwo$-structures defining the same underlying Riemannian metric, whose stationary points are those structures with divergence-free torsion. We show short-time existence and uniqueness of the solution.}

\section*{Introduction}
Let $M$ be a seven dimensional, closed, spin manifold and let $\bar{\varphi}$ be the fundamental three-form of a $\Gtwo$-structure on $M$ with underlying metric $g$ and Hodge operator $\star$.
We study the following flow of $\Gtwo$-structures:
\begin{equation}\label{Gflow}
\begin{cases}
\dot{\varphi}_t=-2\star_t\left(\mathrm{div}(T^{\varphi_t})\wedge\varphi_t\right),\\
\varphi_0=\bar{\varphi},
\end{cases}
\end{equation}
where $\mathrm{div}(T^{\varphi})$ is the divergence of the full torsion tensor $T^\varphi$ associated to $\varphi$.\par
 It is evident that this flow preserves the metric defined by $\bar{\varphi}$; indeed the infinitesimal deformation of the three-form has no components in $\Omega^3_1(M)\oplus\Omega^3_{27}(M)$. Therefore any solution has to be sought in the class of $\Gtwo$-structures compatible with  $g$. This set, denoted by $[\bar{\varphi}]$, naturally inherits a differential structure; precisely it can be identified with the projective spinor bundle associated to $g$.\par
 We can therefore read the evolution equation firstly as a flow of projective spinors and secondly, thanks to a natural identification of $\R^7$ in the seven dimensional (irreducible) real spin module $\Delta$, as a flow of vector fields over $M$. The new flow, though non-linear, turns out to be parabolic and consequently to have short-time existence and uniqueness; indeed we prove the following proposition.
\begin{prop}\label{prop1}
Flow \eqref{Gflow} admits a unique solution $\varphi_t$ for short positive times.
\end{prop}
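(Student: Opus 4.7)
The plan is to exploit the observation already made in the introduction that \eqref{Gflow} preserves the underlying metric $g$, so any solution is forced to stay inside the isometric class $[\bar\varphi]$. Since $[\bar\varphi]$ is identified with the projective spinor bundle $\PS$ associated to $g$, I would first rewrite \eqref{Gflow} as an evolution equation for a unit spinor lift $\psi_t$ of $\varphi_t$. Then, using the $\Spin(7)$-equivariant isomorphism $T_{[\psi]}\PS\cong\psi^\perp\cong\R^7$ provided by Clifford multiplication, the equation would be recast as a flow $\dot X_t=P(X_t)$ for a time-dependent vector field $X_t\in\Gamma(TM)$, with $P$ a nonlinear differential operator. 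After this reduction, short-time existence and uniqueness of $\varphi_t$ become equivalent to the same properties for $X_t$.

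The next step is to identify the order of $P$ and compute its principal symbol. Since the full torsion $T^{\varphi}$ is of first order in $\varphi$, the term $\mathrm{div}(T^{\varphi})$ contributes second derivatives and $P$ is expected to be a genuine second-order operator. The aim is to show that $-P$ is strongly elliptic, that is, that $-\sigma_\xi(P)$ is a positive multiple of $|\xi|^2\,\mathrm{Id}_{TM}$ for every nonzero covector $\xi\in T^{*}M$. Once strong ellipticity is in hand, the standard theory of quasilinear parabolic systems on a closed manifold (for instance Hamilton's formulation) provides a unique short-time solution $X_t$, which lifts back through the identifications above to a unique $\varphi_t$ solving \eqref{Gflow}.

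The main obstacle I foresee is the symbol computation itself. One has to track the chain of identifications $\varphi\leftrightarrow\psi\leftrightarrow X$ through the linearisation of the torsion tensor, through $\mathrm{div}$, and through the wedge with $\varphi$ followed by $\star$, and then massage the resulting leading-order expression into a recognisable form, most naturally a Bochner-type Laplacian on sections of $TM$ up to a positive constant. Unlike the Laplacian flow of arbitrary $\Gtwo$-structures, where the symbol degenerates on variations outside $\Omega^3_7(M)$ and only weak parabolicity holds, here the flow direction is constrained to lie in $\Omega^3_7(M)$ by construction. This is precisely the structural reason to expect that the degenerate directions are removed on the isometric slice $[\bar\varphi]$ and that true parabolicity can be verified; making this heuristic rigorous is the crux of the argument.
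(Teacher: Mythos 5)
Your proposal is correct and mirrors the paper's own argument: reduce \eqref{Gflow} to a flow of unit spinors, use Clifford multiplication to recast it as a second-order flow of vector fields $U_t$, verify strong ellipticity of the linearised operator, and invoke parabolic/Nash--Moser theory (the paper applies Theorem 3.2 of \cite{Vez}). The one small refinement: the principal symbol computed in Lemma~\ref{ellipticity} is not literally a scalar multiple of $|\xi|^2\,\mathrm{Id}_{TM}$ for $U\neq 0$ --- it contains cross-product and rank-one correction terms --- but it is positive-definite when paired with its argument, which is exactly the strong ellipticity your outline requires.
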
\par
Moreover, as we might expect, we find out that the (negative) gradient flow of the energy functional
$$E:\,[\bar{\varphi}]\ni \varphi\mapsto\frac{1}{2}\int_M \star\left|T^\varphi\right|^2\in\R,$$
with respect to a natural $L^2$-metric, is exactly \eqref{Gflow}. We note that this functional is the same considered by Weiss and Witt in \cite{WW12} (with some changes due to other conventions). Anyway we consider its restriction to the subspace of isometric $\Gtwo$-structures; as consequence the related gradient flows are not equivalent though related.\par
Finally we compute the first and second variations of $E$ finding out that critical points, generically, are not local minima. Moreover we prove that the space of infinitesimal deformations of structures having divergence-free torsion is finite dimensional and we explicitly describe the obstruction space to actual deformations.\par\bigskip
The divergence-free condition seems to be a sort of fixing condition for $\Gtwo$-structures in a chosen isometry class. In \cite{Gri2} Grigorian interpreted this class as a class of connections on some `octonion' line bundle proving that condition $\mathrm{div}(T)=0$ corresponds to the Euler-Lagrange equation of an energy functional in this new setting. It is very likely that our point of view will be equivalent to the Grigorian's one, anyway, we use a purely spinorial approach which could light up some different aspects of the involved tensorial quantities.\par
More in depth one may attempt to fix the Laplacian co-flow of $\Gtwo$-structures, which is not well understood (differently from its counterpart), by adding, in some sense, the divergence-free condition. In fact the degeneracy of its evolution operator exactly sits in the infinitesimal directions belonging to $\Omega^4_7(M)$, which, in turn, do not perturb the metric (see \cite{Gri} for an exhaustive treatment). \par\bigskip
Throughout the various sections we adopt the Einstein notation. When a Riemannian metric is fixed we always identify vector fields and one-forms via the Riesz isomorphism.  
\section{$\Gtwo$-structures}
Let $M$ be a seven dimensional manifold. As usual we denote by $\Gamma(M,W)$ the space of smooth sections of a vector bundle, of total space $W$, over $M$.\par
A three-form $\varphi\in\Omega^3(M)$ on a seven dimensional manifold is said to be \emph{stable} if the $\Omega^7(M)$-valued symmetric two-form $b_\varphi$ on $TM$, the tangent bundle of $M$, given by
$$6\,b_\varphi\left(X,Y\right)=\left(X\neg\varphi\right) \wedge \left(Y\neg\varphi\right)\wedge \varphi,\quad X,Y\in\Gamma(M,TM),$$
is non-degenerate. If so $\varphi$ defines a non-vanishing seven-form $\varepsilon_\varphi$ on $M$ by $\sqrt[9]{\mathrm{det}(b_\varphi)}$. A stable form $\varphi$ is said to be \emph{positive} if the metric $g_\varphi$ defined by
$$b_\varphi=g_\varphi\varepsilon_\varphi,$$
is positive definite.\par
A positive three-form of constant length $|\varphi|_\varphi=\sqrt{7}$ is said to be the \emph{fundamental three-form} of a $\Gtwo$-structure. Indeed it can be shown that $\varphi$ uniquely defines a $\Gtwo$-structure, in the sense of principal bundles, on $M$; precisely the stabiliser, under the natural $\mathrm{GL}(T_pM)$ action, of $\varphi_p$ is isomorphic to $\Gtwo$ for any $p\in M$. In this case the formula
$$\varphi(X,Y,Z)=g_\varphi(X\times Y,Z),\quad X,Y,Z\in\Gamma(M,TM),$$
defines a vector cross product $\times$ on $TM$ which is bilinear and skew-symmetric. We denote by $\mathcal{G}$ the space of $\Gtwo$-structures on $M$. 
\begin{rem}
It is a classical result that $M$ admits $\Gtwo$-structure if and only if it is spin (see for instance \cite{Bry}).
\end{rem} 
\par 
Assume that $M$ comes with a Riemannian metric $g$ and that $\varphi$ is the fundamental three-form of a $\Gtwo$-structure. Then the structure, or $\varphi$, is said to be \emph{compatible} with $g$ if and only if $g=g_\varphi$.\par
Given a fundamental three-form $\varphi$ of a $\Gtwo$-structure over $M$ there exists a $(0,2)$-tensor $T$, called the \emph{full torsion tensor} of $\varphi$ (with different notations see \cite{Bry}, \cite{Gri} and many others), defined as
$$\nabla^{\phantom{a}}_a\varphi^{\phantom{\varphi}}_{bcd}=2T_{a}^{\phantom{e}e}(\star_\varphi\varphi)_{ebcd}^{\phantom{a}},$$
where $\nabla$ is the Levi-Civita connection of $g_\varphi$ and $\star_\varphi$ denotes the Hodge operator of $g_\varphi$ and $\varepsilon_\varphi$. Finally we define the \emph{divergence} of $T$ as the one-form
$$(\mathrm{div}\, T)_b=\nabla^a T_{a b}.$$

\section{$\Gtwo$-structures with the same Riemannian metric}
Let $(M,g)$ be a closed seven dimensional Riemannian spin manifold and let $\bar{\varphi}$ be the fundamental three-form of a $\Gtwo$-structure on $M$ compatible with $g$. Denote by $F$, $\nabla$ and $\star$ the special orthogonal frame bundle, the Levi-Civita connection and the Hodge operator of $g$ respectively, and by $[\bar{\varphi}]$ the class of $\Gtwo$-structures compatible with $g$.\par
A convenient way to handle the class $[\bar{\varphi}]$ is by considering the eight dimensional (irreducible) real spin module $\Delta$ (for other approaches see the recent \cite{Gri2}) Let $\mathcal{F}$ be the total space of a spin structure on $(M,g)$ and $S$ be the spinor bundle associated to $g$, that is $S=\mathcal{F}\times_{\Spin(7)}\Delta$.\par
Then $\bar{\varphi}$ defines a unique (up to sign) unit spinor field $\bar{\psi}\in\Gamma(M,S)$ by
\begin{equation}\label{spinform}
\bar{\varphi}(X,Y,Z)=(X\cdot Y\cdot Z\cdot \bar{\psi},\bar{\psi}),\quad X,Y,Z\in TM,
\end{equation}
where the brackets denote the (real) spin metric on $S$ and $\cdot$ denote the Clifford multiplication (see for instance \cite{Chi}). The following Lemma contains some useful formulas involving the Clifford multiplication and the Hodge operator.
\begin{lemma}[\cite{Chi}]
For any vector fields $X,Y,Z$ and $W$ on $M$ the followings hold:
\begin{enumerate}\label{cross}
\item $X\cdot Y\cdot \bar{\psi}=-\left(X\bar{\times} Y\right)\cdot\bar{\psi}-g(X,Y)\bar{\psi}$;
\item $(\star\bar{\varphi})(X,Y,Z,W)=\bar{\varphi}(X,Y,Z\bar{\times}W)-g(X,Z)g(Y,W)+g(X,W)g(Y,Z)$. 
\end{enumerate}
\end{lemma}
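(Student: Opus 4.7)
Both claims are pointwise, $\Spin(7)$-equivariant algebraic identities, so I would work at a single point $p\in M$, identify $S_p$ with the real spin module $\Delta\cong\R^8$, and view $\bar\psi_p$ as a unit $\Gtwo$-invariant spinor. Two facts are used throughout: the Clifford relation $X\cdot Y+Y\cdot X=-2g(X,Y)$, and the skew-adjointness of vector Clifford multiplication with respect to the (symmetric, positive-definite) spin metric. These together imply that $V\mapsto V\cdot\bar\psi$ is a linear isometry from $T_pM$ onto the hyperplane $\bar\psi^\perp\subset\Delta$, and, as a byproduct, that $(X\cdot Y\cdot\bar\psi,\bar\psi)=-g(X,Y)$.

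For (1) I would split $X\cdot Y=-g(X,Y)\mathbf{1}+\tfrac12(X\cdot Y-Y\cdot X)$. Applied to $\bar\psi$, the symmetric part yields the scalar term $-g(X,Y)\bar\psi$ of (1), so it remains to identify the antisymmetric part $\tfrac12(X\cdot Y-Y\cdot X)\cdot\bar\psi$ with $-(X\bar\times Y)\cdot\bar\psi$. Both vectors lie in $\bar\psi^\perp$, so by the isometry $V\mapsto V\cdot\bar\psi$ it suffices to check that their inner products with $Z\cdot\bar\psi$ coincide for every $Z\in T_pM$. Using skew-adjointness and symmetry of the spin metric together with the Clifford anticommutation, the left-hand pairing collapses to $-(X\cdot Y\cdot Z\cdot\bar\psi,\bar\psi)=-\bar\varphi(X,Y,Z)$ by \eqref{spinform}, and the right-hand pairing is $-g(X\bar\times Y,Z)=-\bar\varphi(X,Y,Z)$ by the very definition of the cross product, closing the argument.

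For (2) I would apply (1) twice: from $Z\cdot W\cdot\bar\psi=-(Z\bar\times W)\cdot\bar\psi-g(Z,W)\bar\psi$, together with \eqref{spinform} and $(X\cdot Y\cdot\bar\psi,\bar\psi)=-g(X,Y)$, one obtains
\[
(X\cdot Y\cdot Z\cdot W\cdot\bar\psi,\bar\psi)=-\bar\varphi(X,Y,Z\bar\times W)+g(X,Y)g(Z,W).
\]
The same quantity is then rewritten by decomposing the Clifford product $X\cdot Y\cdot Z\cdot W$ into its $0$-, $2$-, and $4$-form components in $\mathrm{Cl}(T_pM,g)$. The scalar contribution is the Gram determinant $g(X,Y)g(Z,W)-g(X,Z)g(Y,W)+g(X,W)g(Y,Z)$; the $2$-form part vanishes in the $\bar\psi$-pairing since $(e_i\cdot e_j\cdot\bar\psi,\bar\psi)=0$ for $i\neq j$ in any orthonormal frame; and the $4$-form part is identified with $-\star\bar\varphi(X,Y,Z,W)$ via the action of the Clifford volume element $e_1\cdots e_7$ on $\bar\psi$, which is $\pm 1$ by Schur's lemma on the irreducible module $\Delta$ and whose sign is pinned by compatibility with \eqref{spinform}. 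Equating the two evaluations and solving for $\star\bar\varphi(X,Y,Z,W)$ yields (2).

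The main obstacle is precisely this last identification of the $4$-form part of $X\cdot Y\cdot Z\cdot W$ with $-\star\bar\varphi(X,Y,Z,W)$: it reduces to a sign computation with the Clifford volume element, and has to be made consistent with the conventions already fixed in \eqref{spinform}. Once this is done, the Pfaffian-type scalar correction produced by the Clifford-to-wedge conversion is exactly what recovers the $-g(X,Z)g(Y,W)+g(X,W)g(Y,Z)$ piece on the right-hand side of (2).
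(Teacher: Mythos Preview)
The paper does not actually prove this lemma: it is quoted verbatim from \cite{Chi} and no argument is given. So there is no ``paper's own proof'' to compare against; your proposal is being measured against a citation.

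That said, your argument is sound. Part (1) is exactly the standard computation: the $\bar\psi$-component of $X\cdot Y\cdot\bar\psi$ is read off from $(X\cdot Y\cdot\bar\psi,\bar\psi)=-g(X,Y)$, and the $\bar\psi^\perp$-component is pinned down by pairing with $Z\cdot\bar\psi$ and invoking \eqref{spinform} together with $\bar\varphi(X,Y,Z)=g(X\bar\times Y,Z)$. For part (2), your two-step evaluation of $(X\cdot Y\cdot Z\cdot W\cdot\bar\psi,\bar\psi)$ --- once via (1) and once via the Clifford grading --- is correct, and you have honestly isolated the only nontrivial point: the identification of the degree-$4$ contribution with $-(\star\bar\varphi)(X,Y,Z,W)$. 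This follows from $\omega\cdot\bar\psi=\epsilon\bar\psi$ for the volume element $\omega=e_1\cdots e_7$ (with $\epsilon\in\{\pm 1\}$ fixed by irreducibility), which converts a degree-$4$ Clifford monomial into its Hodge-dual degree-$3$ monomial acting on $\bar\psi$; the sign $\epsilon$ is then forced by consistency with \eqref{spinform}. Once that sign is checked on a single basis $4$-tuple, the identity globalises by $\Gtwo$-equivariance, and your equation
\[
-\bar\varphi(X,Y,Z\bar\times W)+g(X,Y)g(Z,W)\;=\;-(\star\bar\varphi)(X,Y,Z,W)+g(X,Y)g(Z,W)-g(X,Z)g(Y,W)+g(X,W)g(Y,Z)
\]
rearranges to the claimed formula. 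In short: nothing is missing beyond the bookkeeping you already flagged.
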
\par
Observe that, for any vector field $U$ and function $u$ on $M$ satisfying $|U|^2+|u|^2=1$, the spinor field
\begin{equation}\label{spinor}
\psi_{U,u}=U\cdot\bar{\psi}+u\bar{\psi},
\end{equation}
has unit length, indeed the Clifford multiplication gives an isometry between $T_pM$ and $\bar{\psi}_p^\perp$ for each $p\in M$; this is an isomorphism of $\Gtwo$-modules. Moreover Formula \eqref{spinform} defines a $1:1$ correspondence between projective spinor fields and elements in $[\bar{\varphi}]$:
\begin{equation}\label{iso}
\begin{CD}
\Gamma(M,\mathbb{P}(S))@>\Phi>>[\bar{\varphi}],\\
[\psi]@>>> \Phi(\psi),
\end{CD}
\end{equation}
which is modelled on the presentation of $\RP^7=\mathbb{P}(\Delta)$ as $\SO(7)/\Gtwo=\Spin(7)/\left(\Gtwo\times\Z_2\right)$. These two observations, combining together, give an explicit description, in terms of three-forms, of $\Phi$ involving vector fields only. It is given in the following Lemma (formula (3.5) in \cite{Bry}) which, for consistence, we are going to prove.
\begin{lemma}[\cite{Bry}]\label{psiform}
The fundamental three-form of the $\Gtwo$-structure $\Phi\left(\left[\psi_{U,u}\right]\right)$ is explicitly given by
\begin{align*}
\Phi\left(\left[\psi_{U,u}\right]\right)=(u^2-|U|^2)\bar{\varphi}+2u\star(\bar{\varphi}\wedge U)+2\left({U}\neg\bar{\varphi}\right)\wedge U.
\end{align*}
\end{lemma}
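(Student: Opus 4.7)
The plan is to compute $\Phi([\psi_{U,u}])(X,Y,Z)=(X\cdot Y\cdot Z\cdot\psi_{U,u},\psi_{U,u})$ directly from \eqref{spinform} after substituting $\psi_{U,u}=U\cdot\bar\psi+u\bar\psi$. Bilinearity of the spin metric splits the expression into a $u^2$ contribution, which equals $u^2\bar\varphi(X,Y,Z)$ by definition, two mixed terms linear in $u$ and in $U$, and one term quadratic in $U$.

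For the mixed terms I would use skew-symmetry of Clifford multiplication with respect to the real spin metric, namely $(A\cdot\psi,\chi)=-(\psi,A\cdot\chi)$, to transport $U$ across the pairing, and then anticommute it past $X\cdot Y\cdot Z$ using $V\cdot W+W\cdot V=-2g(V,W)$. This reduces the two cross terms to a multiple of $(X\cdot Y\cdot Z\cdot U\cdot\bar\psi,\bar\psi)$ plus three 2-form expressions of the form $(X\cdot Y\cdot\bar\psi,\bar\psi)$ paired with an inner product involving $U$. Lemma~\ref{cross}(1) immediately yields $(X\cdot Y\cdot\bar\psi,\bar\psi)=-g(X,Y)$, and two applications of the same identity together with Lemma~\ref{cross}(2) rewrite $(X\cdot Y\cdot Z\cdot U\cdot\bar\psi,\bar\psi)$ as $-\star\bar\varphi(X,Y,Z,U)$ plus a polynomial in $g$. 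A short bookkeeping check shows this metric tail cancels against the 2-form contributions, leaving $-2u\star\bar\varphi(X,Y,Z,U)$, which I would then recognise as $2u\star(\bar\varphi\wedge U)(X,Y,Z)$ via the standard Hodge identity $\star(\bar\varphi\wedge U)=U\neg\star\bar\varphi$.

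For the $U$-quadratic term I would again apply skew-symmetry to obtain $(X\cdot Y\cdot Z\cdot U\cdot\bar\psi,U\cdot\bar\psi)=-(U\cdot X\cdot Y\cdot Z\cdot U\cdot\bar\psi,\bar\psi)$, then slide the outer $U$ through $X\cdot Y\cdot Z$ via the Clifford relation and use $U\cdot U=-|U|^2$ to collapse the central pair. This produces $-|U|^2\bar\varphi(X,Y,Z)$ together with three terms of the shape $g(U,\cdot)\bar\varphi(\cdot,\cdot,U)$, which assemble into precisely $2\bigl((U\neg\bar\varphi)\wedge U\bigr)(X,Y,Z)$ by the standard formula for the wedge of a $2$-form with a $1$-form. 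Summing the four pieces then yields the claimed identity.

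The main obstacle is purely combinatorial: the anticommutator remainders produced by shuffling $U$ past $X\cdot Y\cdot Z$, together with the $g$-polynomial tails coming from Lemma~\ref{cross}, must cancel in precisely the right way for only the three $\Gtwo$-invariant 3-form pieces $\bar\varphi$, $\star(\bar\varphi\wedge U)$ and $(U\neg\bar\varphi)\wedge U$ to survive, with the advertised coefficients $u^2-|U|^2$, $2u$ and $2$. No geometric input beyond Lemma~\ref{cross} and the anticommutation relations of the Clifford algebra is required; the delicate part is careful sign and coefficient tracking.
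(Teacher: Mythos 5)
Your proposal is correct and follows essentially the same route as the paper: split $(X\cdot Y\cdot Z\cdot\psi_{U,u},\psi_{U,u})$ into the $u^2$, mixed, and $U$-quadratic pieces, slide $U$ through $X\cdot Y\cdot Z$ via the Clifford anticommutation relation, and invoke Lemma~\ref{cross}(1) and (2) to identify the surviving combinations with $\bar\varphi$, $2u\star(\bar\varphi\wedge U)$ and $2(U\neg\bar\varphi)\wedge U$. The bookkeeping and the $g$-tail cancellations you anticipate are exactly those carried out in the paper's proof, so no further elaboration is needed.
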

\begin{proof}
We have to show that, for any vector fields $X,Y,Z$, the function
$$\left(X\cdot Y\cdot Z\cdot \psi_{U,u} ,\psi_{U,u}\right),$$
has the claimed expression.
First let us observe that
\begin{align*}
u^2\left(X\cdot Y\cdot Z\cdot \bar{\psi},\bar{\psi}\right)=u^2\bar{\varphi}(X,Y,Z).
\end{align*}
Similarly, using the skew-symmetry of the Clifford multiplication the the Clifford identity $A\cdot B+B\cdot A =-2g(A,B)$ for arbitrary vector fields $A,B$, we obtain
\begin{align*}
\left(X\cdot Y\cdot Z\cdot U\cdot \bar{\psi},\bar{\psi}\right)=&
-\left(U\cdot X\cdot Y\cdot Z\cdot U\cdot \bar{\psi},\bar{\psi}\right)\\
=&2g(U,X)\left( Y\cdot Z\cdot U\cdot \bar{\psi},\bar{\psi}\right)+\left(X\cdot U\cdot Y\cdot Z\cdot U\cdot \bar{\psi},\bar{\psi}\right)\\
=&2g(U,X)\bar{\varphi}(Y,Z,U)+\left(X\cdot U\cdot Y\cdot Z\cdot U\cdot \bar{\psi},\bar{\psi}\right)\\
=&\dots\\
=&2\left(U\wedge\left(U\neg\bar{\varphi}\right)\right)(X,Y,Z)-|U|^2\bar{\varphi}(X,Y,Z).
\end{align*}
Finally let us observe that, as previously seen,
\begin{align*}
&u\left(X\cdot Y\cdot Z\cdot U\cdot \bar{\psi},\bar{\psi}\right)+u\left(X\cdot Y\cdot Z\cdot \bar{\psi},U\cdot\bar{\psi}\right)=\\
&2u\left(X\cdot Y\cdot Z\cdot U\cdot \bar{\psi},\bar{\psi}\right)-2u g(U,X)g(Y,Z)+2ug(U,Y)g(X,Z)-2ug(U,Z)g(X,Y).
\end{align*}
But, thanks to Lemma \ref{cross}, the first term in the right side becomes $-2u\left(X\cdot Y\cdot (Z\bar{\times} U)\cdot\bar{\psi},\bar{\psi}\right)+2ug(X,Y)g(U,Z)$, leaving
$$2u\left[-\bar{\varphi}(X,Y,Z\bar{\times}U)+g(X,Z)g(Y,U)-g(X,U)g(Y,Z)\right],$$
which is equal to $-2u(\star\varphi)(X,Y,Z,U)=2u(U\neg(\star\varphi))(X,Y,Z)$ again by Lemma \ref{cross}.
Therefore, since $U\neg\star \varphi=\star(\varphi\wedge U)$, the lemma follows by putting all together.
\end{proof}
\par
The full torsion tensor, or intrinsic endomorphism, $\bar{T}$ of $\bar{\varphi}$ is related to $\bar{\psi}$ as follows (see \cite{Chi})
\begin{equation}\label{torsion}
\left(\nabla_X\bar{\psi}|Y\cdot\bar{\psi}\right)=\left(\bar{T}(X)\cdot\bar{\psi}|Y\cdot\bar{\psi}\right)=\bar{T}(X,Y),\quad X,Y\in TM.
\end{equation}\par
In the light of Lemma \ref{cross} and Formula \eqref{spinor} we can derive the explicit expression of the full torsion of a generic $\varphi\in[\bar{\varphi}]$.
\begin{lemma}\label{lemtor}
Let $\varphi\in[\bar{\varphi}]$. Then the full torsion tensor $T$ of $\varphi$ is related to $\bar{T}$ by the following formula
\begin{align}
T(X,Y)=& u\, g\left(\nabla_X U,Y\right)-(\nabla_Xu)g(U,Y)+\bar{\varphi}(\nabla_XU,U,Y)\\
\nonumber&(u^2-|U|^2)\bar{T}(X,Y)+2\bar{T}(X,U)g(U,Y)-2u\,\bar{\varphi}(U,\bar{T}(X),Y),
\end{align}
where $X,Y\in\Gamma(M,TM)$ and $\varphi=\Phi(\psi_{U,u})$.
\end{lemma}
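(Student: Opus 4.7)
The plan is to compute $T(X,Y)$ directly from its spinorial characterisation \eqref{torsion} evaluated on $\psi=\psi_{U,u}=U\cdot\bar\psi+u\bar\psi$. First I would expand both sides of $T(X,Y)=(\nabla_X\psi\,|\,Y\cdot\psi)$ via the Leibniz rule for the spin connection and the distributivity of Clifford multiplication, substituting $\nabla_X\bar\psi=\bar T(X)\cdot\bar\psi$ throughout. Here $\bar T(X)$ is regarded as a vector field through the metric, which is legitimate because $\nabla_X\bar\psi\perp\bar\psi$ and Clifford multiplication by vectors realises an isometry of $T_pM$ onto $\bar\psi_p^\perp$. This reduces the statement to evaluating the eight inner products obtained by distribution, each weighted by $1$, $u$, $\nabla_X u$ or $u^2$.

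Next I would reduce each inner product to tensor quantities using three elementary tools: the skew-adjointness of Clifford multiplication by vectors, which gives $(V\cdot\bar\psi,\bar\psi)=0$ and $(V\cdot\bar\psi,W\cdot\bar\psi)=g(V,W)$; Lemma \ref{cross}(1) to collapse any double Clifford product on $\bar\psi$ into a single one plus a scalar; and the defining identity \eqref{spinform} of $\bar\varphi$ for triple Clifford products. The quadratic Clifford pairings $(A\cdot B\cdot\bar\psi,\,C\cdot D\cdot\bar\psi)$ arising from the cross term $U\cdot\bar T(X)\leftrightarrow Y\cdot U$ are handled by the consequence
\[
g(A\bar{\times} B,C\bar{\times} D)=\star\bar\varphi(A,B,C,D)+g(A,C)g(B,D)-g(A,D)g(B,C)
\]
of Lemma \ref{cross}(2). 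The crucial observation is that each $\star\bar\varphi$-contribution so produced carries $U$ in two of its four slots, since both of the crossed Clifford factors feed one $U$ into the four-form, and hence vanishes by antisymmetry.

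Collecting the surviving pieces then matches the claim: the $u$-linear scalar pairings yield $u\,g(\nabla_XU,Y)-(\nabla_Xu)g(U,Y)$; the triple Clifford pairing $(\nabla_XU)\cdot\bar\psi\leftrightarrow Y\cdot U\cdot\bar\psi$ produces $\bar\varphi(\nabla_XU,U,Y)$; the purely $\bar T$-pieces combine into $(u^2-|U|^2)\bar T(X,Y)+2\bar T(X,U)g(U,Y)$ after the four-form cancellation; and the two mixed $u$-linear terms involving $\bar T(X)$ each contribute $-u\,\bar\varphi(U,\bar T(X),Y)$, summing to the final summand. The main obstacle is purely organisational: tracking signs under cyclic permutations of $\bar\varphi$ and of $\bar{\times}$, and checking systematically that every quadruple Clifford product indeed reduces via a repeated $U$ so that no stray four-form terms survive.
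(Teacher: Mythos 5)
Your proposal is correct and follows essentially the same route as the paper's proof: both start from the spinorial characterisation $T(X,Y)=(\nabla_X\psi_{U,u},Y\cdot\psi_{U,u})$, expand via the Leibniz rule and $\nabla_X\bar\psi=\bar T(X)\cdot\bar\psi$, and reduce all Clifford pairings using Lemma~\ref{cross}; the only organisational difference is that the paper first normal-forms $\nabla_X\psi_U$ and $Y\cdot\psi_U$ into the shape $V\cdot\bar\psi+v\bar\psi$ before pairing, whereas you distribute first and reduce the eight individual inner products. Your observation that the lone quadruple Clifford product $(U\cdot\bar T(X)\cdot\bar\psi,\,Y\cdot U\cdot\bar\psi)$ yields a $\star\bar\varphi$-term with $U$ in two slots, hence vanishing, is precisely the bookkeeping point the paper leaves implicit in ``putting all together,'' and your term-by-term tally of the surviving pieces matches the stated formula.
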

\begin{proof}
Let $X,Y$ be two vector fields over $M$. By definition we have to compute $T(X,Y)=(\nabla_X\psi_U,Y\cdot\psi_U).$ First we observe that, by \eqref{spinor},
$$\nabla_X\psi_U=\nabla_X(U\cdot\bar{\psi}+u\bar{\psi})=(\nabla_XU)\cdot\bar{\psi}+U\cdot \bar{T}(X)\cdot\bar{\psi}+(\nabla_Xu)\bar{{\psi}}+u \bar{T}(X)\cdot\bar{\psi}.$$
Then, being $U\cdot \bar{T}(X)\cdot\bar{\psi}=-(U\bar{\times}\bar{T}(X))\cdot\bar{\psi}-\bar{T}(X,U)\bar{\psi}$ by Lemma \ref{cross}, the previous becomes
$$\nabla_X\psi_U=\left(\nabla_XU-U\bar{\times}\bar{T}(X)+u \bar{T}(X)\right)\cdot\bar{\psi}+
\left(-\bar{T}(X,U)+\nabla_Xu\right)\bar{\psi}.
$$
Similarly we obtain
$$Y\cdot\psi_U=\left(uY-Y\bar{\times}U\right)\cdot\bar{\psi}-g\left(Y,U\right)\bar{\psi}.$$
Finally putting all together and using Lemma \ref{cross} we derive the claimed formula. 
\end{proof}
\section{The flow}
We are going to study the following geometric flow 
\begin{equation}\label{gflow}
\begin{cases}
\dot{\varphi}_t=-2\star_t\left(\mathrm{div}(T^{\varphi_t})\wedge\varphi_t\right),\\
\varphi_t\in[\bar{\varphi}],\\
\varphi_0=\bar{\varphi},
\end{cases}
\end{equation}
where $\mathrm{div}(T^\varphi)$ denotes the divergence of the full torsion tensor associated to $\varphi$.
\begin{rem}
Notice that if $\varphi$ satisfies the evolution equation in \eqref{gflow} then it is compatible with $g$. Indeed, by Proposition 4 in \cite{Bry}, $\dot{g}$ identically vanishes. Therefore condition $\varphi_t\in[\bar{\varphi}]$ is automatically satisfied and $\star_t=\star$.
\end{rem}\par
Let us prove the following Lemma.
\begin{lemma}\label{zariskispin}
The Zariski tangent space to $\Gamma(M,\PS)$ at a projective spinor field $[\psi]$ is naturally identified with $\Gamma(M,TM)$ via the Clifford multiplication. 
\end{lemma}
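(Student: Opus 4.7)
The plan is to reduce the statement to a pointwise (fiberwise) identification and then pass to sections. At each $p\in M$, the fiber $\mathbb{P}(S)_p$ is the real projective space of the $8$-dimensional spin module $S_p$, which is a smooth manifold of dimension $7$. For any unit representative $\psi_p$ of $[\psi_p]$, the tangent space $T_{[\psi_p]}\mathbb{P}(S_p)$ canonically identifies with the orthogonal hyperplane $\psi_p^\perp\subset S_p$ with respect to the spin metric: this is the standard identification $T_{[v]}\mathbb{P}(V)\cong V/\langle v\rangle\cong v^\perp$ available on any real inner-product space. Doing this fiberwise along the section $[\psi]$ produces a rank-$7$ vector bundle over $M$ whose space of sections is, by definition, the Zariski tangent space $T_{[\psi]}\Gamma(M,\mathbb{P}(S))$.

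Next, I would exhibit the map $X\mapsto X\cdot\psi$ as a fiberwise isometric isomorphism $TM\to\psi^\perp$. Skew-adjointness of Clifford multiplication by a vector gives
\[
(X\cdot\psi,\psi)=-(\psi,X\cdot\psi)=-(X\cdot\psi,\psi),
\]
so $X\cdot\psi$ lies in $\psi^\perp$; and the Clifford identity $X\cdot X=-|X|^2$ yields
\[
|X\cdot\psi|^2=-(\psi,X\cdot X\cdot\psi)=|X|^2|\psi|^2,
\]
so the map is injective on each fiber and hence, comparing the common rank $7$, a fiberwise isomorphism. This is exactly the observation already used in the paper after Formula \eqref{spinor} to the effect that Clifford multiplication gives an isometry between $T_pM$ and $\bar\psi_p^\perp$.

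Passing to smooth sections then yields an isomorphism $\Gamma(M,TM)\to\Gamma(M,\psi^\perp)=T_{[\psi]}\Gamma(M,\mathbb{P}(S))$, which is the identification claimed in the lemma. The only mild subtlety is making sure that the pointwise identification $T_{[\psi_p]}\mathbb{P}(S_p)\cong\psi_p^\perp$ depends only on the line $[\psi_p]$ and not on the unit lift (it does, up to an overall sign that is absorbed into the identification of the two ways of writing $T\mathbb{P}$); apart from this convention point, no real obstacle is expected, since everything reduces to the elementary geometry of the projective space of the spin representation together with the basic Clifford relations already invoked.
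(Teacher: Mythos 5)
Your proof is correct and takes essentially the same approach as the paper: both identify the tangent space to the projective spinor fiber with $\psi^\perp$ and then use the Clifford multiplication isometry $TM\to\psi^\perp$ (which the paper states right after Formula \eqref{spinor} and you verify directly from skew-adjointness and the Clifford relation). The paper phrases this via the explicit parametrization $\psi_{U,u}=U\cdot\bar\psi+u\bar\psi$ and observes $\delta u=0$ at $\psi$, but this is the same identification you obtain fiberwise.
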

\begin{proof}
The proof is a clear consequence of Formula \eqref{spinor}. Indeed if $\psi$ is a unit spinor field then we can parametrize nearby unit spinor fields by vector fields. Hence any variation $\delta{\psi}$ of $\psi$ will be given by $\delta{U}\cdot \psi$ for some vector field $\delta{U}$ on $M$, being $\delta{u}=0$ in $\psi$. Then the lemma follows by considering the natural projection of $S$ on $\PS$.
\end{proof}
We want to reformulate \eqref{gflow} as a flow in $\Gamma(M,\PS)$ through \eqref{iso}:
\begin{equation}\label{projflow}
\begin{cases}
\dot{\left[\psi_t\right]}=P(\left[\psi_t\right]),\\
[\psi_0]=[\bar{\psi}],
\end{cases}
\end{equation}
where $P$ will be an operator with values in the space of sections of $V\PS$, the vertical bundle of $\PS$, over $M$.
To this aim let us consider the first variation of $\Phi$ at a generic $[{\psi}]$. Then, since $\dot{\psi}=\dot{U}\cdot \psi$ for some vector field $\dot{U}$ as showed in Lemma \ref{zariskispin}, we find out that
$$\dot{\varphi}=D\Phi_{[{\psi}]}[\dot{\psi}]=2\star({\varphi}\wedge \dot{U}).$$
Therefore flow \eqref{gflow} and flow \eqref{projflow} are equivalent if we choose $P$ as
\begin{equation}\label{P}
P([\psi])=\mathrm{div}\,T^{\Phi([\psi])}\cdot\psi.
\end{equation}
\section{Short-times existence and uniqueness}\label{steu}
In this section we show that Equation \eqref{projflow} admits a unique solution. Since we are interested in finding solutions for short times we can consider the lifted flow acting on $\Gamma(M,S)$. In this space, by \eqref{spinor}, any solution will be given by $\psi_{U_t,u_t}$, or  simply $\psi_{U_t}$, for a time-dependent vector field $U_t$ with $|U_t|<<1$ and $u_t=\sqrt{1-|U_t|^2}$.
\begin{lemma}
For any solution $\psi_{U_t}$ of \eqref{projflow} $U_t$ satisfies
\begin{equation}\label{vectorfieldflow}
\begin{cases}
\dot{U}_t=-\mathrm{div}\, T^{U_t}\bar{\times}U_t+u_t\,\mathrm{div}\, T^{U_t},\\
U_0=0,
\end{cases}
\end{equation} 
where $\mathrm{div}\,T^{U_t}$ is nothing else than the divergence of the full torsion tensor of $\psi_{U_t}$, and the vice-versa holds.
\end{lemma}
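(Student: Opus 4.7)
The plan is to lift the flow \eqref{projflow} from projective spinor fields to unit spinor fields via the parametrization $\psi_t=\psi_{U_t,u_t}$ with $u_t=\sqrt{1-|U_t|^2}$, and then extract the ODE for $U_t$ by decomposing the resulting spinorial equation along the orthogonal splitting $S=TM\cdot\bar\psi\oplus\R\bar\psi$ (which is well-defined because Clifford multiplication identifies $\bar\psi^\perp$ with $TM$ isometrically, as noted just after \eqref{spinor}). Because $V\cdot\psi$ is skew-adjoint with respect to the spin metric, $V\cdot\psi$ is automatically orthogonal to $\psi$ for any vector field $V$; consequently the horizontal correction needed to project $P([\psi])$ onto the vertical bundle of $\PS$ vanishes, and the lifted flow is simply
\[
\dot\psi_t=\mathrm{div}\,T^{U_t}\cdot\psi_{U_t}.
\]

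Next, I would compute both sides under the parametrization. On the left,
\[
\dot\psi_t=\dot U_t\cdot\bar\psi+\dot u_t\,\bar\psi.
\]
On the right, writing $V=\mathrm{div}\,T^{U_t}$ and applying item (1) of Lemma \ref{cross} to rewrite $V\cdot U_t\cdot\bar\psi=-(V\bar\times U_t)\cdot\bar\psi-g(V,U_t)\bar\psi$, one gets
\[
V\cdot\psi_{U_t}=\bigl(u_tV-V\bar\times U_t\bigr)\cdot\bar\psi-g(V,U_t)\,\bar\psi.
\]
Matching the two components in the orthogonal decomposition $TM\cdot\bar\psi\oplus\R\bar\psi$ yields exactly the system
\[
\dot U_t=-\mathrm{div}\,T^{U_t}\bar\times U_t+u_t\,\mathrm{div}\,T^{U_t},\qquad \dot u_t=-g(V,U_t),
\]
the first of which is \eqref{vectorfieldflow}, with the initial condition $U_0=0$ corresponding to $u_0=1$ and $\psi_0=\bar\psi$.

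For the consistency check and for the converse, I would use that $g(V\bar\times U_t,U_t)=\bar\varphi(V,U_t,U_t)=0$ by skew-symmetry of $\bar\varphi$. This shows that the first equation above preserves $|U_t|^2$ in the way compatible with $u_t^2+|U_t|^2=1$: differentiating the constraint and substituting the first equation, one recovers the second equation automatically, so that setting $u_t=\sqrt{1-|U_t|^2}$ gives a well-defined lift to the unit sphere bundle. Conversely, given any solution $U_t$ of \eqref{vectorfieldflow} with $|U_t|<1$, the same computation reassembles $(\dot U_t,\dot u_t)$ into $\dot\psi_t=V\cdot\psi_t$, so the projection $[\psi_t]$ solves \eqref{projflow}.

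The computation is largely routine once the orthogonal decomposition is set up; the only subtle point, and the step I would be most careful about, is verifying that the nonlinear constraint $u_t^2+|U_t|^2=1$ is infinitesimally preserved by the equation for $U_t$ alone, so that restricting to the unit sphere in $S$ really does reduce the lifted flow to a flow in $\Gamma(M,TM)$ with no additional scalar equation imposed.
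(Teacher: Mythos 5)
Your proof is correct and follows essentially the same route as the paper: lift to the unit sphere bundle, expand $\dot\psi_t=\mathrm{div}\,T^{U_t}\cdot\psi_{U_t}$ via item (1) of Lemma \ref{cross}, and match components in the splitting $TM\cdot\bar\psi\oplus\R\bar\psi$. You are somewhat more careful than the paper about two points the paper leaves implicit — why the lift from $\PS$ to $S_1$ is well-defined (skew-adjointness of Clifford multiplication) and why the scalar $\dot u$ equation is automatic from the vector equation (via $g(V\bar\times U,U)=0$) — but the underlying argument is identical.
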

\begin{proof}
Suppose flow \eqref{projflow} has a solution. Then, by \eqref{P}, it is
$$\frac{d}{dt}\psi_{U}=\mathrm{div}\, T^{U}\cdot\psi_{U}.$$
Expanding this expression gives
\begin{align*}
\dot{U}\cdot \bar{\psi}+\dot{u}\bar{\psi}=\left(-\mathrm{div}\, T^{U}\bar{\times}U+u \mathrm{div}\, T^{U}\right)\cdot\bar{\psi}-g\left(\mathrm{div}\, T^{U},U\right)\bar{\psi}.
\end{align*}
Therefore if $U$ solves \eqref{vectorfieldflow} then the first of these two equations is satisfied. But, since $\dot{u}=u^{-1}g(U,\dot{U})$, the second follows as well. The other direction is also clear.
\end{proof}
\begin{rem}\label{remspinflow}
It is clear that any solution of \eqref{projflow} can be lifted to a family of (unit) spinor fields satisfying the following, equivalent, evolution equation
\begin{equation}\label{spinflow}
\begin{cases}
\dot{\psi}_t=\mathrm{div}\, T^{\psi_t}\cdot\psi_t,\\
{\psi}_0=\bar{\psi},
\end{cases}
\end{equation}
where, clearly, $T^\psi=T^{\Phi([\psi])}$.\par
This flow is manifestly strongly elliptic, in fact the principal symbol of the linearized evolution operator at a co-vector $\xi$ is the multiplication by $|\xi|^2|\psi|^2$,
and therefore we could prove it has short-time existence and uniqueness; of course $|{\psi}_t|$ will be constant. However, for a future reference, we are interested in to understand the evolution operator in \eqref{vectorfieldflow}. Therefore we will proceed in this direction.
\end{rem}
At this point, in order to prove Proposition \ref{prop1}, we have only to show that \eqref{vectorfieldflow} has short-time existence and uniqueness. But first let us prove the following lemma. 
\begin{lemma}\label{ellipticity}
There exists an open neighbourhood $\mathcal{U}$ of the zero section in $\Gamma(M,TM)$, in the $\mathcal{C}^0$-topology, such that the non-linear order $2$ differential operators $P''$ and $P'$
\begin{gather*}
P''\,:\mathcal{U}\ni U\mapsto \mathrm{div}\, T^{U}\in\Gamma(M,TM),\\
P'\,:\mathcal{U}\ni U\mapsto -\mathrm{div}\, T^{U}\bar{\times}U+u\,\mathrm{div}\, T^{U}\in\Gamma(M,TM),\end{gather*}
are strongly elliptic.
\end{lemma}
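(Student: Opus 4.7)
The plan is to compute the principal symbols of the linearizations $dP''|_U$ and $dP'|_U$ directly from Lemma~\ref{lemtor}, verify they are negative-definite at $U=0$, and then observe that since their coefficients depend algebraically on $U$ and on $u=\sqrt{1-|U|^2}$ (no derivatives of $U$ appear in the symbol coefficients), strong ellipticity persists on a $\mathcal{C}^0$-open neighborhood $\mathcal{U}$ of the zero section of $\Gamma(M,TM)$.

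Among the six summands in the expression for $T^U$ given by Lemma~\ref{lemtor}, only $u\,g(\nabla_X U,Y)$, $-(\nabla_X u)\,g(U,Y)$ and $\bar{\varphi}(\nabla_X U,U,Y)$ carry a derivative of $U$; the others are of order zero in $U$ and hence contribute only to lower-order parts of the linearized operators. Linearizing each of these three pieces in a direction $V$, retaining only contributions of order one in $V$ (the higher-order variations of $u$ produce only zeroth-order terms in $V$, since $u$ depends algebraically on $U$), and then applying $\nabla^a$, one assembles the principal symbol of $dP''|_U$ into
\[
\sigma_{P''}(\xi)\,V \;=\; -|\xi|^2\Bigl(u\,V \,-\, U\bar{\times} V \,+\, \tfrac{1}{u}\,g(U,V)\,U\Bigr).
\]
Pairing with $V$ and using that $\bar{\varphi}$ is alternating (so $\bar{\varphi}(U,V,V)=0$) gives $\langle \sigma_{P''}(\xi)V,V\rangle = -|\xi|^2\bigl(u|V|^2+u^{-1}g(U,V)^2\bigr)$, which is uniformly negative-definite whenever $u>0$, that is on $\mathcal{U}=\{\|U\|_{\mathcal{C}^0}<1\}$.

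For $P'$, by the Leibniz rule only the terms $-L_U[V]\,\bar{\times}\,U$ and $u\,L_U[V]$ (with $L_U[V]=d(\mathrm{div}\,T^U)[V]$) contribute to the principal symbol; the remaining pieces $-\mathrm{div}\,T^U\bar{\times} V$ and $du[V]\cdot\mathrm{div}\,T^U$ are of order zero in $V$. One therefore obtains $\sigma_{P'}(\xi)V = -|\xi|^2\bigl(u\,A_U V - A_U V \bar{\times} U\bigr)$, where $A_U V = uV - U\bar{\times} V + u^{-1}g(U,V)U$ is the bracket above. Using the $G_2$-identity $g(U\bar{\times} V,\,U\bar{\times} W)=|U|^2 g(V,W)-g(U,V)g(U,W)$, which is an immediate consequence of Lemma~\ref{cross}(2) applied with $Z=U$, the bracket collapses to $V$ itself and one finds $\sigma_{P'}(\xi)V=-|\xi|^2 V$, so strong ellipticity of $P'$ holds on the same $\mathcal{U}$. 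The main difficulty is just careful bookkeeping of which of the six terms of $T^U$ contributes at which differential order after linearization; the algebraic dependence of $u$ on $U$ together with the above cross-product identity conspire to produce these unexpectedly clean symbols.
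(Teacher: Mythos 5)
Your proof is correct and takes essentially the same route as the paper: compute the principal symbol of $DP''_U$ by tracking the three summands of Lemma~\ref{lemtor} that carry a derivative of $U$ (including the hidden derivative through $u$), obtain the symbol $\mp|\xi|^2\bigl(u\,V - U\bar{\times}V + u^{-1}g(U,V)U\bigr)$ modulo an overall sign convention, and then pass to $P'$ via the Leibniz rule, noting that only $d(\mathrm{div}\,T^U)[V]$ contributes at top order. The one refinement over the paper is your observation that the $\Gtwo$ cross-product identity from Lemma~\ref{cross}(2) collapses the full symbol of $DP'_U$ to $-|\xi|^2 V$; the paper instead only pairs the symbol with $V$, observes that the "extra" term contributes $|\xi|^2|V\bar{\times}U|^2\geq 0$, and stops once positivity is clear, without recording that the quadratic form actually reduces to exactly $|\xi|^2|V|^2$ (and so, unlike for $P''$, the ellipticity constant for $P'$ is $U$-independent).
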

\begin{proof}
Let $\mathcal{U}$ be the open set of vector fields with length less than $1$; to avoid the singularity $\sqrt{1-|U|^2}$. Let $U\in\mathcal{U}$ and $Q_U$ be the linearization of $P''$ at $U$, that is $DP''_U$. For simplicity we write $Q=Q_U$. A straightforward computation shows that, by Lemma \ref{torsion},
$$Q(V)=u\nabla^a\nabla_{a}V_{b}-\left[D\left(\nabla^a\nabla_{a} u \right)\right]_U(V)U_{b}+(\nabla^a\nabla_{a}V^m)U^n\bar{\varphi}_{bmn}+l.o.t.,$$
for any vector field $V$. Keeping in mind that $u=\sqrt{1-|U^2|}$ we derive that $\left(\nabla^a\nabla_{a} u \right)=-\frac{U^m(\nabla^a\nabla_{a}U^n)g_{mn}}{\sqrt{1-|U|^2}}+l.o.t.$, which leads to
$$\left[D\left(\nabla^a\nabla_{a} u \right)\right]_U(V)=-\frac{U^m(\nabla^a\nabla_{a}V^n)g_{mn}}{\sqrt{1-|U|^2}}+l.o.t.$$
Putting all together we obtain
\begin{align*}
&Q(V)=u\nabla^a\nabla_{a}V_{b}+\frac{1}{u}\left[{U^m(\nabla^a\nabla_{a}V^n)g_{mn}} \right]U_{b}+(\nabla^a\nabla_{a}V^m)U^n\bar{\varphi}_{bmn}+l.o.t.
\end{align*}\par
Now let $\sigma_\xi Q$ be the principal symbol of $Q$ at a tangent vector $\xi$. It turns out that
\begin{align*}
(\sigma_\xi Q)(V)=(1-|U|^2)^{1/2}|\xi|^2V+(1-|U|^2)^{-1/2}|\xi|^2g(U,V)U+|\xi|^2V\bar{\times}U.
\end{align*}
In particular we see that 
$$g((\sigma_\xi Q)(V),V)=(1-|U|^2)^{1/2}|\xi|^2|V|^2+(1-|U|^2)^{-1/2}|\xi|^2g(U,V)^2.$$ Therefore $Q_U$ is a strongly elliptic linear differential operator of order $2$ away from $|U|=1$. \par
Finally, to prove that $P'$ is strongly elliptic, we observe that the extra term of $\sigma_\xi DP'_U(V)$ is given by
$$-(1-|U|^2)^{1/2}|\xi|^2V\bar{\times}U-|\xi|^2(V\bar{\times}U)\bar{\times}U.$$
Then, when paired with $V$, it gives $|\xi|^2|V\bar{\times}U|^2$; indeed $g((V\bar{\times}U)\bar{\times}U,V)=-|V\bar{\times}U|^2$ by the symmetries of $\bar{\times}$.
\end{proof}
\begin{prop}
Flow \eqref{vectorfieldflow} has unique solution for short positive times.
\end{prop}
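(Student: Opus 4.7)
The plan is to read \eqref{vectorfieldflow} as a quasi-linear Cauchy problem for a section $U_t$ of $TM$ over the closed manifold $M$, and to apply the standard short-time existence and uniqueness theorem for strongly parabolic quasi-linear systems. I would first restrict to the open $\mathcal{C}^0$-neighbourhood $\mathcal{U}$ of the zero section given by Lemma \ref{ellipticity}, on which the nonlinear operator
$$P'(U)=-\mathrm{div}\,T^{U}\bar{\times}U+u\,\mathrm{div}\,T^{U}$$
is well-defined. By the explicit formula of Lemma \ref{lemtor} together with the smoothness of the Levi-Civita connection, $P'$ is a smooth nonlinear differential operator of order two in $U$. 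The initial datum $U_0=0$ lies in the interior of $\mathcal{U}$, so any solution starting there will remain in $\mathcal{U}$ for short times by continuity.

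The crucial analytic input is Lemma \ref{ellipticity}, which identifies the linearization $DP'_U$ at every $U\in\mathcal{U}$ as a strongly elliptic linear differential operator of order two; in particular, at $U=0$ the principal symbol reduces to $|\xi|^2\,\mathrm{Id}$. Equation \eqref{vectorfieldflow} is therefore a strictly parabolic quasi-linear Cauchy problem of the form $\dot{U}_t=P'(U_t)$, $U_0=0$, on a closed manifold. Invoking the standard short-time existence and uniqueness theorem for such systems, obtainable either by a fixed-point argument in parabolic Hölder spaces or via the Nash--Moser inverse function theorem of Hamilton, yields a unique smooth solution $U_t$ on some interval $[0,\varepsilon)$, which stays inside $\mathcal{U}$ for $\varepsilon$ small enough.

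The only genuine obstacle to applying the general theorem is the verification of strong parabolicity, which is precisely the content of Lemma \ref{ellipticity}, together with the regularity of the nonlinear coefficients, which is immediate from Lemma \ref{lemtor}. Uniqueness then follows from the same parabolic theorem via the usual contraction argument on the difference of two solutions. Combined with Remark \ref{remspinflow} and the chain of equivalences $\eqref{vectorfieldflow}\Leftrightarrow\eqref{projflow}\Leftrightarrow\eqref{gflow}$, this also recovers Proposition \ref{prop1} for the original flow.
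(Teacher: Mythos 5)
Your proposal is correct and follows essentially the same route as the paper: both reduce matters to the strong ellipticity established in Lemma \ref{ellipticity} and then invoke short-time existence and uniqueness machinery for strongly parabolic systems on the closed manifold $M$. The only difference is that the paper applies a specific result (Theorem 3.2 of Bedulli--Vezzoni \cite{Vez}, with the trivial Hodge system $(TM,TM,\mathrm{Id}_{TM},\mathrm{Id}_{TM})$ and $L=P'$), while you invoke the general quasi-linear parabolic theory (or, equivalently, the Nash--Moser inverse function theorem) directly; the substance is the same.
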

\begin{proof}
In the light of Lemma \ref{ellipticity} the seasoned reader should argue that the Nash-Moser theorem  (see \cite{Ham1}) can be applied. Indeed this case turns to be extremely easy to treat, since the operator $P'$ is strongly elliptic in any direction; whilst in \cite{Ham2}, \cite{BX}, \cite{Gri} or \cite{Vez} this condition is satisfied in certain directions only.\par
However we avoid to repeat once more the celebrated argument. Instead we directly choose to apply Theorem 3.2 in \cite{Vez} in a very straightforward case. Indeed, with the same terminology and notation adopted there, it is enough to consider the trivial Hodge system $(TM,TM,\mathrm{Id}_{TM},\mathrm{Id}_{TM})$ and choose $L=P'$, $\tilde{L}_U=l_U=(DP')_U$ for any $U\in\mathcal{U}$.
\end{proof}
\section{Energy}
Let $S_1$ be the unit sphere subbundle of $S$. As we already noticed in Remark \ref{remspinflow}, Flow \eqref{Gflow} turns to be equivalent to \eqref{spinflow} in $\Gamma(M,S_1)$. This last can be easily read as the negative gradient flow of the functional $E$
\begin{equation}\label{energy}
E:\,\Gamma(M,S_1)\ni\psi\mapsto\frac{1}{2}\int_M\star\left|T^\psi\right|^2\in\R,
\end{equation}
as we will see in a moment. Recall that for any spinor field $\psi\in\Gamma(M,S_1)$  its Zariski tangent space can be identified with $\Gamma(M,TM)$ by using the Clifford multiplication (Lemma \eqref{zariskispin}). 
\begin{prop}\label{propenergy}
Flow \eqref{spinflow} is the gradient flow of $E$ with respect to the $L^2$-metric inherited by $\Gamma(M,TM)$.
\end{prop}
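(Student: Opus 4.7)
The strategy is to recognise that, on unit spinors, the energy $E$ coincides with the Dirichlet energy $\tfrac{1}{2}\int_M |\nabla\psi|^2\,\star 1$, after which the computation reduces to an integration by parts and a careful evaluation of the rough Laplacian of $\psi$. The identity $|T^\psi|^2=|\nabla\psi|^2$ comes from the defining formula $T(X,Y)=(\nabla_X\psi,Y\cdot\psi)$: since $|\psi|\equiv 1$ forces $\nabla_X\psi\perp\psi$, and since Clifford multiplication identifies $TM$ isometrically with $\psi^\perp\subset S$ (by the standard polarisation of $V\cdot V\cdot\psi=-|V|^2\psi$), one has $\nabla_X\psi = T(X)^\sharp\cdot\psi$, where $T(X)^\sharp$ denotes the metric dual of $Y\mapsto T(X,Y)$; summing $|\nabla_X\psi|^2=|T(X)^\sharp|^2$ over an orthonormal frame then yields $|\nabla\psi|^2=|T|^2$ pointwise.

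Next I would take the first variation of $E$. By Lemma \ref{zariskispin} an arbitrary infinitesimal deformation of $\psi$ has the form $\delta\psi=V\cdot\psi$ with $V\in\Gamma(M,TM)$, so
$$\delta E(V) = \int_M(\nabla_a\psi,\nabla^a(V\cdot\psi))\,\star 1 = -\int_M(\nabla^a\nabla_a\psi,V\cdot\psi)\,\star 1,$$
the integration by parts being legitimate because $M$ is closed.

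The main step is the evaluation of the rough Laplacian. Working in a local orthonormal frame parallel at a point and using $\nabla_a\psi=T(e_a)^\sharp\cdot\psi$, the Leibniz rule gives
$$\nabla^a\nabla_a\psi \;=\; (\mathrm{div}\,T)\cdot\psi \;+\; g^{ab}\,T(e_a)^\sharp\cdot T(e_b)^\sharp\cdot\psi.$$
The first summand pairs with $V\cdot\psi$ to yield exactly $g(\mathrm{div}\,T,V)$, by the isometry property of Clifford multiplication. For the second summand I would apply the $\psi$-version of Lemma \ref{cross}, namely $A\cdot B\cdot\psi = -(A\times_\psi B)\cdot\psi-g(A,B)\psi$, where $\times_\psi$ is the cross product determined by $\psi$. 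Contracting with the symmetric $g^{ab}$ annihilates the antisymmetric cross-product contribution and leaves a pure multiple of $\psi$ (in fact $-|T|^2\psi$), which is orthogonal to $V\cdot\psi$ and therefore drops out. Assembling the pieces one obtains $\delta E(V) = -\int_M g(\mathrm{div}\,T^\psi, V)\,\star 1$, so that the $L^2$-gradient of $E$ at $\psi$ is $-\mathrm{div}\,T^\psi$; the negative gradient flow is $\dot\psi = \mathrm{div}\,T^\psi\cdot\psi$, which is precisely \eqref{spinflow}.

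The only delicate point is this reduction of the quadratic-in-$T$ piece of the rough Laplacian: a priori it lies in the full spinor bundle, but the combination of the Clifford identity with the $g^{ab}$ symmetrisation forces its component in $\psi^\perp$ to vanish, so only the linear $\mathrm{div}\,T$ part survives the pairing with $V\cdot\psi$. Once this is recognised, the remainder of the argument is routine.
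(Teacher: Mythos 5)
Your argument is correct, and it takes a genuinely different route from the paper. The paper proceeds by differentiating $T^\psi$ directly — computing
$\delta T^\psi(X,Y)=g(\nabla_X U,Y)-2\varphi(T(X),Y,U)$
from the definition $T(X,Y)=(\nabla_X\psi,Y\cdot\psi)$ via Lemma \ref{cross} — and then pairing with $T^\psi$ and integrating by parts; the quadratic-in-$T$ term is killed by the tensorial symmetry observation that $T^{ab}T_a^{\phantom{a}n}$ is symmetric in $(b,n)$ while $\varphi_{mnb}$ is skew. You instead observe that $E$ is the Dirichlet energy $\tfrac12\int_M|\nabla\psi|^2$ (since $\nabla_X\psi=T(X)^\sharp\cdot\psi$ and Clifford multiplication is an isometry onto $\psi^\perp$), integrate by parts once to reduce to the rough Laplacian, and dispose of the quadratic term by noting that $g^{ab}T(e_a)^\sharp\cdot T(e_b)^\sharp\cdot\psi=-|T|^2\psi$ lies along $\psi$, hence is orthogonal to the admissible variations $V\cdot\psi$. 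Both computations hinge on the same pointwise identity (the integrands agree because $|T^\psi|^2=|\nabla\psi|^2$ holds pointwise), but the mechanism that annihilates the unwanted term is different: a skew-versus-symmetric contraction in the paper, an orthogonality $\psi\perp\psi^\perp$ in your version. Your route is arguably more conceptual — it makes transparent that the flow is the harmonic-map-type heat flow for the unit spinor field — while the paper's route stays closer to the $G_2$-tensor calculus and is the one that feeds directly into the second-variation formula of Proposition \ref{variations}. One small matter of bookkeeping: the statement is phrased as ``gradient flow'' but, as your sign computation and the introduction both make clear, \eqref{spinflow} is the \emph{negative} gradient flow of $E$; your conclusion $\delta E(V)=-\int_M g(\mathrm{div}\,T^\psi,V)\star 1$ and the resulting $\dot\psi=\mathrm{div}\,T^\psi\cdot\psi$ are consistent with the paper.
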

\begin{proof}
Let $\psi$ any unit spinor field. Any variation of $\psi$ in $\Gamma(M,S_1)$ will be given by $\delta \psi=U\cdot \psi$, for some vector field $U$, due to Lemma \eqref{zariskispin}. Let $X,Y$ be vector fields on $M$. Then the variation of $T^\psi$, by \eqref{torsion}, will be
\begin{align*}
\delta T^\psi(X,Y)=&\left(\nabla_X\delta\psi,Y\cdot\psi\right)+\left(\nabla_X\psi,Y\cdot\delta\psi\right)=\left(\nabla_X(U\cdot\psi),Y\cdot\psi\right)+\left(\nabla_X\psi,Y\cdot U\cdot \psi\right)\\
=&\left(\nabla_X(U)\cdot\psi,Y\cdot\psi\right)+\left(U\cdot(\nabla_X\cdot\psi),Y\cdot\psi\right)+\left(\nabla_X\psi,Y\cdot U\cdot \psi\right)
\end{align*}
From Lemma \ref{cross} we derive that 
\begin{align*}
\left(U\cdot(\nabla_X\cdot\psi),Y\cdot\psi\right)+\left(\nabla_X\psi,Y\cdot U\cdot \psi\right)&=-g\left(U\times T(X),Y\right)-g\left(T(X),Y\times U\right)\\
&=-2\varphi(T(X),Y,U),
\end{align*}
which leads to
$$\delta T^\psi(X,Y)=g\left(\nabla_X U,Y\right)-2\varphi(T(X),Y,U).$$
In local coordinates this reads as $(\delta T^\psi)^{\phantom{a}}_{ab}=\nabla^{\phantom{a}}_a U^{\phantom{a}}_b-2U^mT_a^{\phantom{m}n}\varphi^{\phantom{m}}_{mnb}$. Thus
\begin{align}\label{gT}
g(T^\psi,\delta T^\psi)&=T_{ab}\nabla^aU^b-2U^mT_a^{\phantom{a}n}\varphi^{\phantom{m}}_{mnb}\\
\nonumber &=-\nabla^a(T_{ab})U^b+\nabla^a(T_{ab}U^b)-2U^mT_a^{\phantom{a}n}\varphi^{\phantom{A}}_{mnb}.
\end{align}
\par
On the other hand the linearization of $E$ at $\psi$ is
\begin{equation*}
DE_\psi(U)=\int_M  g(T^\psi,\delta T^\psi)\star 1,
\end{equation*}
therefore, taking the integral of \eqref{gT}, we find out that
$$DE_\psi(U)=-\int_M g(\mathrm{div}\,T^\psi,U)+ 2U^mT^{ab}T_a^{\phantom{a}n}\varphi^{\phantom{m}}_{mnb}\star 1.$$
But $U^mT^{ab}T_a^{\phantom{a}n}\varphi^{\phantom{m}}_{mnb}$ identically vanishes since $T^{ab}T_a^{\phantom{a}n}$ is symmetric in $(b,n)$ and so $T^{ab}T_a^{\phantom{a}n}\varphi^{\phantom{m}}_{mnb}$ has to be zero for any $m$. 
\end{proof}
\begin{prop}\label{variations}
The first variation of $E$ at a unit spinor field $\psi$ is given by
\begin{align*}
&DE_\varphi(U)=-\int_M g(U,\mathrm{div}\,T^\varphi)\star 1=\int_Mg(\nabla U,T^\varphi)\star 1.
\end{align*}
Moreover, if $\varphi$ is a critical point of $E$ then its second variation at $\varphi$ is the symmetric bilinear form
\begin{align*}
&D^2 E_\varphi(U,V)=\int_M g\left(U, \Delta' V \right)+2g\left(U,(\nabla^aV^m)T_a^{\phantom{m}n}\varphi^{\phantom{m}}_{mnb}\right)\star 1,
\end{align*}
where $\Delta'$ denotes the Bochner Laplacian $-\nabla^a\nabla_a$ acting on vector fields and $U,V$ denote arbitrary vector fields.
\end{prop}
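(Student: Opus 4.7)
The first variation is essentially free from the proof of Proposition~\ref{propenergy}. Recall that proof derived $(\delta T^\psi)_{ab}=\nabla_a U_b-2U^m T_a{}^n \varphi_{mnb}$ and showed that the contraction $T^{ab}\cdot 2U^m T_a{}^n \varphi_{mnb}$ vanishes identically by the symmetry of $T^{ab}T_a{}^n$ in $(b,n)$. Hence $g(T^\psi,\delta T^\psi)=T^{ab}\nabla_a U_b=g(T^\psi,\nabla U)$, so $DE_\psi(U)=\int_M g(\nabla U,T^\psi)\star 1$, and one further integration by parts yields $-\int_M g(U,\mathrm{div}\,T^\psi)\star 1$. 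This accounts for the first displayed formula.

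For the second variation I would differentiate the first variation. Since $\psi$ is a critical point, $\mathrm{div}\,T^\psi=0$ there, so any correction arising from the non-flatness of the Zariski identification (e.g.\ from the $s$-dependence of the vector field that represents $U$ along a deformation) contributes a term proportional to $DE_\psi$ and therefore vanishes. Thus it suffices to compute
\[
D^2 E_\psi(U,V)=-\int_M g\bigl(U,\,\delta_V(\mathrm{div}\,T^\psi)\bigr)\star 1,
\]
with $\delta_V$ the variation induced by $\delta\psi=V\cdot\psi$.

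Applying $\nabla^a$ to the formula for $(\delta_V T^\psi)_{ab}$ obtained above (with $V$ in place of $U$) gives
\[
\delta_V(\mathrm{div}\,T^\psi)_b=\nabla^a\nabla_a V_b-2(\nabla^a V^m)T_a{}^n\varphi_{mnb}-2V^m(\nabla^a T_a{}^n)\varphi_{mnb}-2V^m T_a{}^n\nabla^a\varphi_{mnb}.
\]
The third term vanishes because $\nabla^a T_a{}^n=(\mathrm{div}\,T^\psi)^n=0$ at the critical point. For the fourth, I would substitute the defining relation $\nabla^a\varphi_{mnb}=2T^{ae}(\star\varphi)_{emnb}$, turning it into $-4V^m T^{ae}T_a{}^n(\star\varphi)_{emnb}$; this is precisely the setting for the symmetry argument already exploited in Proposition~\ref{propenergy}, because $T^{ae}T_a{}^n$ is symmetric in $(e,n)$ while $(\star\varphi)_{emnb}$ is antisymmetric in $(e,n)$, so the contraction is zero. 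Pairing the remaining two terms with $U^b$ and using $\Delta'=-\nabla^a\nabla_a$ yields the claimed expression; symmetry of the resulting bilinear form in $(U,V)$ is automatic because $D^2 E_\psi$ is a Hessian at a critical point.

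The main obstacle is the careful index bookkeeping for the last two terms of $\delta_V(\mathrm{div}\,T^\psi)_b$: one has to recognise that, at a critical point, both the ``divergence of divergence'' term and the torsion-torsion-$(\star\varphi)$ term drop out, the latter only after invoking the same $(e,n)$-symmetry/antisymmetry pairing that kills the analogous cubic term in the first-variation computation. Everything else is essentially an application of Stokes' theorem and the formula for $\delta T^\psi$ already established.
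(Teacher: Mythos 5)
Your proposal is correct and follows essentially the same route as the paper: read off $\delta T$ from Proposition~\ref{propenergy}, take the divergence, expand by Leibniz, kill the $\nabla^a T_a{}^n$ term with the critical-point hypothesis, and kill the $T\!\cdot\!T\!\cdot\!\star\varphi$ term by the same symmetric-against-antisymmetric contraction used in the first variation. You are slightly more explicit than the paper on two points the paper elides — the full Leibniz expansion of $\nabla^a(V^m T_a{}^n\varphi_{mnb})$, and the remark that the ``connection correction'' to the Hessian is harmless at a critical point because it is proportional to $DE_\psi$ — but these are clarifications rather than a different argument.
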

\begin{proof}
In Proposition \ref{propenergy} we have already computed the linearization of $E$. So let us assume $\mathrm{div}\, T^\varphi=0$ and consider the second derivative $D^2E_\psi(U,V)=D(DE_\cdot(U))_\psi(V)$ at arbitrary vector fields $U,V$. Then, since $(D T^\cdot)_\psi(V)_{ab}=\nabla_aV_b -2V^mT_a^{\phantom{a}n}\varphi^{\phantom{a}}_{mnb}$, it turns out that
$$g\left(U,(D\mathrm{div}\,T^\cdot)_\psi(V)\right)=U^b \nabla^a\nabla_aV_b-2U^b \nabla^a\left(V^mT_a^{\phantom{a}n}\varphi^{\phantom{a}}_{mnb}\right).$$
Finally $\nabla^a\left(V^mT_a^{\phantom{a}n}\varphi^{\phantom{m}}_{mnb}\right)=\nabla^a\left(V^m\right)T_a^{\phantom{a}n}\varphi^{\phantom{m}}_{mnb}$. Indeed 
$$\nabla^a\left(T_a^{\phantom{a}n}\varphi^{\phantom{m}}_{mnb}\right)=T_a^{\phantom{a}n}T^{ap}_{\phantom{p}}(\star\varphi)^{\phantom{p}}_{pmnb},$$ which identically vanishes being $T_a^{\phantom{a}n}T^{ap}_{\phantom{p}}$ symmetric in $(n,p)$. The claimed formula then follows.\end{proof}
Let $\varphi\in[\bar{\varphi}]$ and denote by $R^\varphi$ the operator
$$R^\varphi:\Gamma(M,TM)\ni U\mapsto 2\nabla^a\left(U^m\right)T_a^{\phantom{a}n}\varphi^{\phantom{mn}b}_{mn}\in\Gamma(M,TM).$$
\begin{corol}\label{1jet}
Let $\bar{\varphi}$ satisfy $\mathrm{div}\,T^{\bar{\varphi}}=0$. Then the moduli space of infinitesimal deformations of $\bar{\varphi}$ in $[\bar{\varphi}]$ having divergence-free torsion are finite dimensional, with dimension grater than or equal to $\mathrm{dim}\,\mathrm{Ker}(\Delta')$. Moreover the obstruction space to actual deformations of $\varphi$ is represented by
$$\mathrm{Ker}(\Delta'+R^{\bar{\varphi}})/\mathrm{Ker}(\Delta').$$  
\end{corol}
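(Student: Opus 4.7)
The plan is to study the nonlinear equation $\mathrm{div}\,T^U = 0$ on vector fields $U \in \Gamma(M,TM)$ near $U = 0$ via its linearisation at the origin, and then run a Kuranishi-style obstruction argument using the resulting elliptic self-adjoint operator.

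First I would compute $D(\mathrm{div}\,T)_{\bar\varphi}$, which is essentially contained in the proof of Proposition~\ref{variations}: the hypothesis $\mathrm{div}\,T^{\bar\varphi} = 0$ makes the identity $\nabla^a(V^m T_a{}^n \varphi_{mnb}) = \nabla^a(V^m)\,T_a{}^n \varphi_{mnb}$ hold, and tracing the displayed formula there one gets $D(\mathrm{div}\,T)_{\bar\varphi}(V) = -(\Delta' + R^{\bar\varphi})V$. Thus the space of first-order deformations of $\bar\varphi$ in $[\bar\varphi]$ preserving the condition $\mathrm{div}\,T = 0$ is exactly $\mathrm{Ker}(\Delta' + R^{\bar\varphi})$. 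Since $R^{\bar\varphi}$ is first order, the principal symbol of $\Delta' + R^{\bar\varphi}$ is $|\xi|^2\,\mathrm{Id}$, so the operator is strongly elliptic and on the closed manifold $M$ has finite-dimensional kernel.

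For the dimension bound, the Bochner identity $\int_M g(\Delta'V,V)\star 1 = \int_M |\nabla V|^2\star 1$ forces $V \in \mathrm{Ker}(\Delta')$ to be parallel; then $R^{\bar\varphi}(V) = 2\nabla^a(V^m)\,T_a{}^n\varphi_{mnb} = 0$ immediately, and so $\mathrm{Ker}(\Delta') \subseteq \mathrm{Ker}(\Delta' + R^{\bar\varphi})$. For the obstruction I would apply a Kuranishi argument: given $V_0 \in \mathrm{Ker}(\Delta' + R^{\bar\varphi})$ I look for a formal power series $U(t) = tV_0 + \tfrac{t^2}{2}V_2 + \cdots$ with $\mathrm{div}\,T^{U(t)} \equiv 0$; the order-$t^2$ equation reads $(\Delta' + R^{\bar\varphi})V_2 = -Q(V_0,V_0)$, where $Q$ is the symmetric quadratic part of $U \mapsto \mathrm{div}\,T^U$ at $0$. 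Because $\bar\varphi$ is a critical point of $E$, the operator $\Delta' + R^{\bar\varphi}$ is the Hessian of $E$ and is therefore $L^2$-self-adjoint, so the Fredholm alternative places the obstruction to $V_0$ lifting inside $\mathrm{Ker}(\Delta' + R^{\bar\varphi})$.

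Finally, to collapse this obstruction onto the claimed quotient $\mathrm{Ker}(\Delta' + R^{\bar\varphi})/\mathrm{Ker}(\Delta')$ I must show that every parallel $V$ lifts to an honest deformation. For such $V$, rescaled so $|V|<1$, set $U(t) = tV$; then $u(t) = \sqrt{1-t^2|V|^2}$ is constant in $x$, so in Lemma~\ref{lemtor} every term containing $\nabla V$ or $du$ drops out. Applying $\nabla^a$ to what survives and using $\mathrm{div}\,\bar T = 0$ together with the defining relation $\nabla^a\bar\varphi_{mnb} = 2\bar T^{ae}(\star\bar\varphi)_{emnb}$, the divergence of $T^{tV}$ reduces to a contraction of the shape $V^m\,\bar T_a{}^n\,\bar T^{ap}(\star\bar\varphi)_{pmnb}$, which vanishes by exactly the symmetric-against-antisymmetric cancellation used at the end of Proposition~\ref{variations}. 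The main obstacle is precisely this last computation: keeping track of the many terms produced by Lemma~\ref{lemtor} and organising them so that the parallel hypothesis on $V$ triggers the required cancellation is where any hidden subtlety would appear.
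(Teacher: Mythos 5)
Your computation of the linearisation $D(\mathrm{div}\,T)_{\bar\varphi}=-(\Delta'+R^{\bar\varphi})$, the ellipticity and finite-dimensionality of its kernel, and the inclusion $\mathrm{Ker}(\Delta')\subseteq\mathrm{Ker}(\Delta'+R^{\bar\varphi})$ via the Bochner formula all match the paper. Your extra observation that a parallel $V$ produces an \emph{exact} one-parameter family $U(t)=tV$ with $\mathrm{div}\,T^{tV}\equiv 0$ is correct --- after plugging $\nabla V=0$ and $u$ constant into Lemma~\ref{lemtor}, the divergence reduces to $V^m\bar T_a{}^n\bar T^{ae}(\star\bar\varphi)_{emnb}$ which dies for symmetry reasons --- and it is a nice way to get the dimension lower bound. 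The paper does not use this; it instead applies the Banach implicit function theorem directly to $P''$, with no formal power series.

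The gap is in the final step, the identification of the obstruction space. The fact that every $V\in\mathrm{Ker}(\Delta')$ lifts to an honest deformation tells you that the Kuranishi map $\kappa:\mathrm{Ker}(\Delta'+R^{\bar\varphi})\to\mathrm{Ker}(\Delta'+R^{\bar\varphi})$ \emph{vanishes on} the subspace $\mathrm{Ker}(\Delta')$; it does \emph{not} tell you that $\kappa$ \emph{takes values in} the orthogonal complement of $\mathrm{Ker}(\Delta')$. It is the latter, not the former, that lets you descend $\kappa$ to a map into the quotient $\mathrm{Ker}(\Delta'+R^{\bar\varphi})/\mathrm{Ker}(\Delta')$ and declare that quotient to be the obstruction space. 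The missing ingredient is the paper's integration-by-parts observation: for any $U\in\mathcal U$ and any parallel $W$,
\begin{equation*}
\left\langle\mathrm{div}\,T^{U},W\right\rangle_{L^2}=-\left\langle T^{U},\nabla W\right\rangle_{L^2}=0,
\end{equation*}
so the \emph{entire nonlinear} operator $P''$ has range inside $\mathrm{Ker}(\Delta')^{\perp}=\mathrm{Im}(\Delta')$, not merely its linearisation. That forces every quadratic (and higher) obstruction term $Q(V_0,V_0)$ to lie in $\mathrm{Im}(\Delta')$, hence its projection onto $\mathrm{Ker}(\Delta'+R^{\bar\varphi})$ lands in $\mathrm{Ker}(\Delta'+R^{\bar\varphi})\cap\mathrm{Ker}(\Delta')^{\perp}\cong\mathrm{Ker}(\Delta'+R^{\bar\varphi})/\mathrm{Ker}(\Delta')$, and the claimed obstruction space follows (equivalently, as in the paper, one treats $P'':\mathcal U\to\mathrm{Im}(\Delta')$ and reads off the cokernel of its linearisation as $\mathrm{Im}(\Delta')/\mathrm{Im}(\Delta'+R^{\bar\varphi})\cong\mathrm{Ker}(\Delta'+R^{\bar\varphi})/\mathrm{Ker}(\Delta')$). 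Add that one integration by parts and your Kuranishi argument closes; without it, the last sentence of your proposal does not establish the quotient.
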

\begin{proof}
Recall that, as we showed in propositions \ref{propenergy} and \ref{variations}, the linearization of $\mathrm{div}\,T^{\bar{\varphi}}$ at $\bar{\varphi}$  is simply $-\Delta'-R^{\bar{\varphi}}$. In particular it is a strongly elliptic differential operator  which is also self-adjoint with respect to the standard $L^2$-metric and satisfies $\mathrm{Ker}(\Delta'+R^{\bar{\varphi}})\supseteq\mathrm{Ker}(\Delta')$.\par
On the other hand local deformations of $\bar{\varphi}$ having divergence-free torsion correspond to the zero set of $P''$, defined in \ref{ellipticity}. Actually the range of this map is in the $L^2$-orthogonal complement of $\mathrm{Ker}(\Delta')$, which is $\mathrm{Im}(\Delta')$ by self-adjointness. Indeed if $W$ belongs to this kernel it is covariantly constant, therefore
$$\left<\mathrm{div}\, T^U,W\right>_{L^2}=-\left< T^U,\nabla W\right>_{L^2}=0.$$\par
In particular infinitesimal deformations of $\bar{\varphi}$ belong to $\mathrm{Ker}(\Delta'+R^{\bar{\varphi}})$. Moreover if $\mathrm{Im}(\Delta')$ was equal to $\mathrm{Im}(\Delta'+R^{\bar{\varphi}})$ then, by the Banach space implicit function theorem, $\left\{P''=0\right\}$ would be a smooth manifold of dimension $\mathrm{dim}\,\mathrm{Ker}(\Delta')$ near $0$. Therefore the obstruction space turns to be $\mathrm{Im}(\Delta')/\mathrm{Im}(\Delta'+R^{\bar{\varphi}})$ which is equal to $\mathrm{Ker}(\Delta'+R^{\bar{\varphi}})/\mathrm{Ker}(\Delta')$.
\end{proof}
\begin{corol}
If $T^{\bar{\varphi}}=0$ then deformations of $\bar{\varphi}$ in $[\bar{\varphi}]$ satisfying $\mathrm{div}(T^\varphi)=0$ are unobstructed, torsion-free, and define a smooth manifold of dimension $b_1(M)$.
\end{corol}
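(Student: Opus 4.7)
The plan is to specialise Corollary \ref{1jet} to the torsion-free case, identify the resulting tangent model with harmonic one-forms via Bochner, and then promote infinitesimal deformations to genuine torsion-free deformations by exhibiting them explicitly through Lemma \ref{lemtor}.

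First I would observe that when $T^{\bar{\varphi}}=0$ the zeroth-order operator $R^{\bar{\varphi}}$ introduced just before Corollary \ref{1jet} vanishes identically, since its definition $R^{\bar{\varphi}}(U)_b = 2\nabla^a(U^m)T_a^{\phantom{a}n}\bar{\varphi}_{mn}^{\phantom{mn}b}$ contains the torsion as a factor. Therefore $\mathrm{Ker}(\Delta'+R^{\bar{\varphi}})=\mathrm{Ker}(\Delta')$ and the obstruction space $\mathrm{Ker}(\Delta'+R^{\bar{\varphi}})/\mathrm{Ker}(\Delta')$ of Corollary \ref{1jet} is trivial. The Banach implicit function theorem argument already deployed there then produces a smooth local moduli of divergence-free deformations of dimension exactly $\mathrm{dim}\,\mathrm{Ker}(\Delta')$.

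Second, I would identify this dimension with $b_1(M)$. On a compact Riemannian manifold $\Delta'U=-\nabla^a\nabla_a U=0$ implies, after pairing with $U$ and integrating, that $\nabla U=0$, so $\mathrm{Ker}(\Delta')$ coincides with the space of parallel vector fields. A torsion-free $G_2$-structure is Ricci-flat, hence the Weitzenb\"{o}ck formula $\Delta_{\mathrm{Hodge}}=\nabla^{*}\nabla+\mathrm{Ric}$ reduces on one-forms to $\Delta_{\mathrm{Hodge}}=\Delta'$. Thus parallel one-forms are precisely the harmonic ones, and $\mathrm{dim}\,\mathrm{Ker}(\Delta')=b_1(M)$.

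Finally, I need to upgrade from infinitesimal to actual deformations and show that the whole moduli is torsion-free. Specialising Lemma \ref{lemtor} to $\bar{T}=0$, the full torsion of $\Phi([\psi_{U,u}])$ becomes
\begin{equation*}
T(X,Y)=u\,g(\nabla_X U,Y)-(\nabla_X u)g(U,Y)+\bar{\varphi}(\nabla_X U,U,Y).
\end{equation*}
For any parallel $U$ with $|U|<1$ we also have $u=\sqrt{1-|U|^2}$ constant, so every term vanishes and the corresponding $G_2$-structure is torsion-free, hence in particular has divergence-free torsion. This furnishes an explicit $b_1(M)$-dimensional family of torsion-free structures inside the moduli. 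The main delicacy is to conclude that this family exhausts the moduli: since the moduli is a smooth manifold of dimension exactly $b_1(M)$ by the first two steps, and the explicit family is a smooth submanifold of the same dimension whose tangent space at $\bar{\varphi}$ is $\mathrm{Ker}(\Delta')$, the two must coincide on a neighbourhood of $\bar{\varphi}$. Hence every nearby divergence-free deformation is torsion-free, and the moduli is unobstructed of dimension $b_1(M)$.
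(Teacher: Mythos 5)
The paper gives no proof of this corollary, so there is nothing to compare against directly; what you have written is a complete and correct derivation from the machinery the paper sets up. Each step checks out: with $T^{\bar{\varphi}}=0$ the zeroth-order term $R^{\bar{\varphi}}$ manifestly vanishes, so the obstruction space of Corollary~\ref{1jet} is trivial and the implicit-function-theorem argument there produces a smooth local moduli modelled on $\mathrm{Ker}(\Delta')$; compactness gives $\mathrm{Ker}(\Delta')=\{\text{parallel vector fields}\}$, and Ricci-flatness of torsion-free $\Gtwo$-metrics together with the Weitzenb\"ock formula identifies this with $\mathcal{H}^1(M)$, hence dimension $b_1(M)$; specialising Lemma~\ref{lemtor} to $\bar{T}=0$ and $\nabla U=0$ (so $u$ constant) kills every term and exhibits an explicit $b_1(M)$-parameter family of torsion-free structures inside the moduli. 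Your final step is also sound: an embedded submanifold of the moduli with the same finite dimension is open by invariance of domain, so the explicit torsion-free family fills out a neighbourhood of $\bar{\varphi}$ in $\{P''=0\}$, giving the torsion-free claim for all nearby divergence-free deformations. One cosmetic remark: the matching of tangent spaces at $\bar{\varphi}$ that you flag as the ``main delicacy'' is actually not what closes the argument --- equality of dimensions plus the inclusion of one family in the other is what does it --- but the tangent-space check is a reasonable sanity confirmation and does no harm.
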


\section*{Acknowledgements}
The author is grateful to Prof. Anna Maria Fino for her constant interest in his work, to Sergey Grigorian and Luigi Vezzoni for useful improvements, to Francesco Pediconi and Alberto Raffero for several pleasant conversations, and to ‘‘Università degli Studi di Firenze" for all the support he received.

\newpage

\end{document}